\newtheorem{theorem}{Theorem}[section]
\newtheorem{lemma}[theorem]{Lemma}
\newtheorem{proposition}[theorem]{Proposition}
\newtheorem{corollary}[theorem]{Corollary}
\newcommand{\p}{\partial}
\renewcommand{\d}{\delta}
\newcommand{\D}{\Delta}
\renewcommand{\L}{\Lambda}
\begin{document}
\DeclareGraphicsExtensions{.jpg,.pdf,.mps,.png}
\title{Relations between some topological indices and the line graph}
%\title{Some topological indices on line graphs}

\author[W. Carballosa]{Walter Carballosa}
\address{Department of Mathematics and Statistics, Florida International University, 11200 SW 8th Street
Miami, FL 33199, USA.}
\email{waltercarb@gmail.com}

\author[Ana Granados]{Ana Granados$^{(1)}$}
\address{Saint Louis University, Madrid Campus
Avenida del Valle, 34 - 28003 Madrid, Spain}
\email{aportil2@slu.edu}
\thanks{$^{(1)}$ Supported in part by two grants from Ministerio de Econom{\'\i}a y Competitividad, Agencia Estatal de
Investigación (AEI) and Fondo Europeo de Desarrollo Regional (FEDER) (MTM2016-78227-C2-1-P and MTM2017-90584-REDT), Spain.}

\author[Domingo Pestana]{Domingo Pestana$^{(1)}$}
\address{Departamento de Matem\'aticas, Universidad Carlos III de Madrid,
Avenida de la Universidad 30, 28911 Legan\'es, Madrid, Spain}
\email{dompes@math.uc3m.es}

\author[Ana Portilla]{Ana Portilla$^{(1)}$}
\address{Saint Louis University, Madrid Campus
Avenida del Valle, 34 - 28003 Madrid, Spain}
\email{agranado@slu.edu}

\author[Jos\'e M. Sigarreta]{Jos\'e M. Sigarreta$^{(1)}$}
\address{Facultad de Matem\'aticas, Universidad Aut\'onoma de Guerrero,
Carlos E. Adame No.54 Col. Garita, 39650 Acalpulco Gro., Mexico,
and Instituto de F{\'\i}sica, Benem\'erita Universidad Aut\'onoma de Puebla, Apartado Postal J-48, Puebla 72570, M\'exico}
\email{jsmathguerrero@gmail.com}

\date{\today}

\maketitle{}

\centerline{{\bf RELATIONS BETWEEN SOME TOPOLOGICAL INDICES AND THE LINE GRAPH}}

\begin{abstract}
The concepts of geometric-arithmetic and harmonic indices were introduced in the area of
chemical graph theory recently. They have proven to correlate well with physical and chemical properties of some molecules.
The aim of this paper is to obtain new inequalities involving  the first Zagreb, the harmonic, and the geometric-arithmetic $GA_1$ indices.
Furthermore, inequalities relating these indices and line graphs are proven.
\end{abstract}

{\it Keywords: First Zagreb index, Geometric-arithmetic index, Harmonic index, Vertex-degree-based topological index, Line graph.}

{\it 2010 AMS Subject Classification numbers: 05C07, 92E10.} %Primary ; Secondary .

\section{Introduction}

A single number representing a chemical structure in graph-theoretical terms via the
molecular graph, is called a topological descriptor.  If in addition this descriptor correlates with
a molecular property, then it is called a topological index, and it is used to understand physicochemical
properties of chemical compounds.
A reason why these indices are relevant, is that they capture some of the properties of a molecule in a single number.
For this reason, hundreds of topological indices have been introduced and studied, starting with the
seminal work by Wiener in which he modeled physical properties of alkanes by using the sum of all shortest-path distances of
a (molecular) graph (see \cite{Wi}).

Topological indices based on degrees of vertices have been
used for several decades. Among them, a proportion has been recognized as a useful tool in
chemical research. Probably, the most known descriptor is the Randi\'c connectivity
index ($R$) \cite{R}. In fact, there are thousands of papers and a couple of books about this molecular descriptor (see, e.g., \cite{GF}, \cite{LG}, \cite{LS}, \cite{RS}, \cite{RS0} and the references therein).
During many years, scientists were trying to improve the predictive power of this index. As a consequence,  a large number of new topological
descriptors similar to the original Randi\'c index were proposed.
The first and second Zagreb indices,
denoted by $M_1$ and $M_2$, respectively are among the main ones. They are defined as
$$
M_1(G) = \sum_{uv\in E(G)} (d_u + d_v) = \sum_{u\in V(G)} d_u^2,
\qquad
M_2(G) = \sum_{uv\in E(G)} d_u d_v ,
\qquad
$$
where $uv$ denotes the edge of the graph $G$ connecting the vertices $u$ and $v$, and
$d_u$ is the degree of the vertex $u$.
The interest on these indices is shown in works such as \cite{BF}, \cite{D2}, \cite{FGE}, \cite{L}.

Another remarkable topological descriptor is the \emph{harmonic} index, defined in \cite{Faj} as
$$
H(G) = \sum_{uv\in E(G)}\frac{2}{d_u + d_v}\,,
$$
This one did not attract much attention when introduced, but in the last few years, a remarkably large number of papers studying the properties of such index have appeared (see, e.g., \cite{DBAV,FMS,Liu13,RS,WTD,Wu,Wu2,2,Z,Zhong1,Zhong3,ZX,Zhu2}). It also has a chapter in the recent survey \cite{RS5}.

Furthermore, its chemical applicability has recently began to be investigated for example in \cite{Furtula,Gutman8}.
It has been discovered that, regarding physical and chemical properties, it is similar in correlation to the
%The harmonic index has reasonably good correlation abilities; in fact, it gives similar correlations with physical and chemical properties compared with the
well-known Randi\'c index. More concretely,
in the paper of Gutman and To\v{s}ovi\'c \cite{Gutman8}, the correlation abilities of $20$ vertex-degree-based topological indices were tested for the case of standard
heats of formation and normal boiling points of octane isomers.
It is remarkable to realize that the harmonic index has good correlation coefficients for this situation.

\smallskip

For a given class of graphs, two questions that arise naturally are, on one hand,
estimating bounds for indices and, on the other, finding the graphs
which are extremal for those indices.
Many results have been
obtained along these lines, in particular for $H(G)$.
For example, Favaron, Mah\'eo and Sacl\'e \cite{FMS} considered the relationship between the harmonic index and the eigenvalues of graphs, and Zhong and Xu \cite{Zhong1,2,Zhong3} determined the minimum and maximum values of the harmonic index for connected graphs, trees, unicyclic graphs, and bicyclic graphs, and characterized the corresponding extremal graphs, respectively.
For example, trees with maximum and minimum harmonic index are the path $P_{n}$ and the star $S_{n}$ respectively.

Another topological descriptor that we shall deal with is the \emph{geometric-arithmetic index} defined in \cite{VF} as
$$
GA_1(G) = GA(G) = \sum_{uv\in E(G)}\frac{\sqrt{d_u d_v}}{\frac12 (d_u + d_v)} \,.
$$
Although $GA$ was introduced barely a decade ago, there are already many papers dealing with this index
(see, e.g., \cite{D}, \cite{DGF}, \cite{DGF}, \cite{DZT}, \cite{MRS}, \cite{PST}, \cite{RS2}, \cite{RS3}, \cite{VF} and the references therein).
The results in \cite[p.598]{DGF} show that the $GA$ index gathers the same information on the molecule under study as other geometric-arithmetic indices.

Even though the number of possible benzenoid hydrocarbons is huge (for example, for benzenoid hydrocarbons with 35 benzene rings it is $5.85\cdot 10^{21}$ \cite{VGJ}) only about 1000 of them are known.
Therefore, modeling their physicochemical properties is important in order
to predict properties of currently unknown species.
The predicting ability of the $GA_1$ index compared with
Randi\'c index is reasonably better (see \cite[Table 1]{DGF}).
The graphic in \cite[Fig.7]{DGF} (from \cite[Table 2]{DGF}, \cite{TRC}) shows
that there exists a good linear correlation between $GA_1$ and the heat of formation of benzenoid hydrocarbons
(the correlation coefficient is equal to $0.972$).
Furthermore, the improvement in
prediction with $GA_1$ index comparing to Randi\'c index in the case of standard
enthalpy of vaporization is more than 9$\%$. That is why one can think that $GA_1$ index
should be considered in the QSPR/QSAR researches.

Line graphs were initially introduced in the papers \cite{W} and \cite{Kr},
but the terminology was used in \cite{HN} for the first time.
The \emph{line graph} $\mathcal{L}(G)$ of $G$ is a graph whose vertices are the edges of $G$, and two vertices are incident if and only if they have a common end vertex in $G$.
For these line graphs, some  topological indices have been considered, for example in \cite{NKMT}, \cite{RLC} and \cite{SX}.

When studying topological indices, it is needed to find bounds on them which involve several parameters.
The aim of this paper is to obtain new inequalities involving the first Zagreb, the harmonic and the geometric-arithmetic  $GA_1$ indices.
Furthermore, inequalities relating these indices and line graphs are proven.

Throughout this work, $G=(V (G),E (G))$ denotes a (non-oriented) finite simple (without multiple edges and loops) graph such that each connected connected component of $G$ has at least an edge.
Given a graph $G$ and $v\in V(G)$, we denote by $N(v)$ the set of \emph{neighbors} of $v$, i.e.,
$N(v)=\{u\in V(G)|\, uv\in E(G) \}$.
We denote by $\D,\d,n,m$ the maximum degree, the minimum degree and the cardinality of the set of vertices and edges of $G$, respectively.
Also, we denote by $\D_{\mathcal{L}(G)},\d_{\mathcal{L}(G)},n_{\mathcal{L}(G)},m_{\mathcal{L}(G)}$ the maximum degree, the minimum degree and the cardinality of the set of vertices and edges of the line graph $\mathcal{L}(G)$ of $G$, respectively.
By $G_1 \approx G_2$, we mean that the graphs $G_1$ and $G_2$ are isomorphic.
We say that a graph $G$ is \emph{non-trivial} if each connected connected component of $G$ has at least two edges.
Since $\mathcal{L}(P_2)$ is a single vertex without edges,
in order to work with line graphs we just consider non-trivial graphs.
Also, we denote by $\D_{\mathcal{L}(G)},\d_{\mathcal{L}(G)},n_{\mathcal{L}(G)},m_{\mathcal{L}(G)}$ the maximum degree, the minimum degree and the cardinality of the set of vertices and edges of the line graph $\mathcal{L}(G)$ of $G$, respectively.
By $G_1 \approx G_2$, we mean that the graphs $G_1$ and $G_2$ are isomorphic.

\section{Fist Zagreb Index and Geometric-Arithmetic Index}

%We will use also the following particular case of Kantorovich's inequality.
%
%\begin{lemma} \label{l:Kantorovich}
%If $0<M_1\le x_1,\dots,x_m \le M_2$, then
%$$
%\Big(x_1+\cdots +x_m \Big) \Big(\frac{1}{x_1}+\cdots +\frac{1}{x_m} \Big)
%\le m^2 \Big(\,\frac{M_1+M_2}{2\sqrt{M_1M_2}} \,\Big)^2 .
%$$
%\end{lemma}

First, we obtain an upper bound of the first Zagreb index just in terms on $m$ and $\D$.

\begin{theorem} \label{t:m1}
If $G$ is a graph with $m$ edges and maximum degree $\D$, then
$$
M_1(G) \le \max \big\{2\Delta^2 +m^2 +(6-2\Delta) m - 2\Delta -4\,,\; 2\Delta^2 +m^2 +(4-2\Delta) m + 4 \,,\; m(m-1) \big\} .
$$
\end{theorem}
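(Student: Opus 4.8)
The plan is to control $M_1$ through two elementary facts. The first is that a vertex of degree $\Delta$ is incident to $\Delta$ distinct edges, so $m\ge\Delta$ and we may set $s:=m-\Delta\ge 0$. The second is the path-counting (``cherry'') bound: any simple graph with $e$ edges satisfies $M_1\le e^2+e$. This holds because two edges share at most one endpoint, so $\sum_v\binom{d_v}{2}\le\binom{e}{2}$, and therefore $M_1=2\sum_v\binom{d_v}{2}+2e\le e(e-1)+2e=e^2+e$.

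First I fix a vertex $w$ with $d_w=\Delta$ and split $E(G)$ into the $\Delta$ edges incident to $w$ and the set $F$ of the remaining $s$ edges. Writing $f_u:=\deg_F(u)$ for $u\ne w$, every neighbour of $w$ has degree $1+f_u$ while every other vertex has degree $f_u$; expanding the squares gives the identity
\[
M_1(G)=\Delta^2+\Delta+2\sum_{u\in N(w)}f_u+\sum_{u\ne w}f_u^2 .
\]
Now $\sum_{u\ne w}f_u^2=M_1(F)\le s^2+s$ by the cherry bound applied to the $s$-edge graph $F$, and since $\sum_{u\ne w}f_u=2s$ with all $f_u\ge 0$, the partial sum satisfies $\sum_{u\in N(w)}f_u\le 2s$. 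Hence
\[
M_1(G)\le \Delta^2+\Delta+4s+(s^2+s)=\Delta^2+\Delta+s^2+5s .
\]

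What is left is bookkeeping. Substituting $s=m-\Delta$ rewrites the right-hand side as $2\Delta^2+m^2+(5-2\Delta)m-4\Delta$, and subtracting this from the first entry of the maximum leaves exactly $m+2\Delta-4$. Thus the bound just obtained is dominated by $2\Delta^2+m^2+(6-2\Delta)m-2\Delta-4$ whenever $m+2\Delta\ge 4$, i.e.\ for every admissible graph except the single edge $(\Delta,m)=(1,1)$; that lone case is covered by the middle entry, which equals $9\ge 2=M_1(K_2)$. This already yields the theorem. The third entry $m(m-1)$ is the one naturally produced by a coarser route: since $M_1=\sum_{uv\in E(G)}(d_u+d_v)\le 2m\Delta$ and $2m\Delta\le m(m-1)$ once $m\ge 2\Delta+1$, it alone handles the edge-rich regime, should one prefer a genuine three-case split to the single inequality above.

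The main obstacle I anticipate is organisational rather than conceptual: writing the decomposition identity correctly, i.e.\ keeping separate the neighbours of $w$ (which carry the extra $+1$) from the vertices seen only by $F$, and invoking the cherry bound for $F$ in the sharp form $M_1(F)\le|F|^2+|F|$. After that, the reduction to the stated maximum is a single linear comparison; the only point requiring care is to remember the degenerate regime $m+2\Delta<4$, where the first entry of the maximum is not the operative bound.
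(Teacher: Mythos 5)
Your proof is correct, and it takes a genuinely different route from the paper's. The paper starts from $d_u+d_v\le m+1$ for every edge $uv$ (whence $M_1(G)\le m(m+1)$) and then runs a case analysis according to whether some edge attains $d_u+d_v=m+1$, some edge attains $d_u+d_v=m$, or neither; in the first two cases it bounds $M_1$ by writing down the essentially forced degree sequence, and each of the three entries of the maximum corresponds to one case. You instead isolate a maximum-degree vertex $w$, split $E(G)$ into the $\Delta$ edges at $w$ and the remaining $s=m-\Delta$ edges $F$, and combine the exact identity $M_1(G)=\Delta^2+\Delta+2\sum_{u\in N(w)}f_u+M_1(F)$ with the cherry bound $M_1(F)\le s^2+s$ (which is correctly justified via $\sum_v\binom{d_v}{2}\le\binom{e}{2}$) and $\sum_{u\in N(w)}f_u\le 2s$, obtaining the single inequality $M_1(G)\le\Delta^2+\Delta+s^2+5s=2\Delta^2+m^2+(5-2\Delta)m-4\Delta$. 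Your linear comparison with the first entry of the maximum (difference exactly $m+2\Delta-4$) is correct, and you rightly flag and dispose of the one exceptional case $(\Delta,m)=(1,1)$ via the second entry. Your argument buys a cleaner and in fact slightly sharper closed-form bound, and it sidesteps the paper's somewhat delicate claims about which degree sequences can occur when $d_u+d_v\in\{m,m+1\}$; the trade-off is that the second and third entries of the stated maximum play no structural role for you (they appear only as a patch for $K_2$ and as an optional remark), whereas in the paper they arise organically from the case split.
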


\begin{proof}
Since $d_u + d_v\le m+1$ for every $uv\in E(G)$, we have
$$
M_1(G) = \sum_{v\in V(G)} d_v^2
= \sum_{uv\in E(G)}(d_u + d_v)
\le m (m+1) ,
$$
%GA_1(G) + GA_1(\mathcal{L}(G))
%& \le \frac12\, M_1(G)
%\le \frac12\, m (m+1)
%= \binom{m+1}{2} .
If $d_u + d_v = m+1$ for some edge $uv\in E(G)$, then a fortiori one of the vertices, say $u$, must have maximum degree $\Delta$ and so the other one, say $v$, must have degree $m+1-\Delta$. The rest of vertices must have degree $1$ or $2$. Therefore, it is clear that
$$
\begin{aligned}
M_1(G)  & = \sum_{uv\in E(G)}(d_u + d_v) = \sum_{v\in V(G)} d_v^2 \le \Delta^2+ (m+1-\Delta)^2 + 2^2 (\Delta-1+m+1-\Delta-1) \\
& = 2\Delta^2 +m^2 +6m-2\Delta(m+1)-4 = 2\Delta^2 +m^2 +(6-2\Delta) m - 2\Delta -4 \,.
\end{aligned}
$$
If $d_u + d_v = m$ for some edge $uv\in E(G)$ then a fortiori one of the vertices, say $u$, must have maximum degree $\Delta$ and so the other one, say $v$, must have degree $m-\Delta$. Let us observe that in this case we have an extra edge with extremes distinct of $u$ and $v$. Depending on where it is situated this extra edge we have two possible configurations. In the first one all the vertices which are distinct of $u$ and $v$ must have degree $1$ or $2$, but on the other one we have two vertices (which are different of $u$ and$v$) with degree $3$. Therefore, it is clear that
$$
\begin{aligned}
M_1(G)  & = \sum_{uv\in E(G)}(d_u + d_v) = \sum_{v\in V(G)} d_v^2 \le \Delta^2+ (m-\Delta)^2 + + 3^3 \times 2 + 2^2 (\Delta-1+m+1-\Delta-1-2) \\
& = 2\Delta^2 +m^2 -2\Delta m +18 +4(m-4) = 2\Delta^2 +m^2 +(4-2\Delta) m + 4 \,.
\end{aligned}
$$
Finally, if $d_u + d_v = m-1$ for some edge $uv\in E(G)$, then
$$
M_1(G) = \sum_{uv\in E(G)}(d_u + d_v) \le \sum_{uv\in E(G)} (m-1) =m(m-1).
$$
\end{proof}

Theorem \ref{t:m1} and \cite[Corollary 2]{PST}  have the following consequence.

\begin{corollary} \label{t:line5}
If $G$ is a non-trivial graph with $m$ edges and maximum degree $\D$, then
$$
GA_1(G) + GA_1(\mathcal{L}(G))
\le \frac{1}{2}\, \max \big\{2\Delta^2 +m^2 +(6-2\Delta) m - 2\Delta -4\,,\; 2\Delta^2 +m^2 +(4-2\Delta) m + 4 \,,\; m(m-1) \big\} .
$$
\end{corollary}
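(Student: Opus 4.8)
The plan is to chain two ingredients: the external input \cite[Corollary 2]{PST}, and the fresh bound on $M_1(G)$ just established in Theorem \ref{t:m1}. The decisive observation is that the right-hand side of the corollary is exactly one half of the maximum appearing in Theorem \ref{t:m1}. This strongly suggests that \cite[Corollary 2]{PST} provides, for every non-trivial graph $G$, the inequality
$$
GA_1(G) + GA_1(\mathcal{L}(G)) \le \tfrac12\, M_1(G).
$$
So the entire content of the corollary reduces to substituting the upper estimate for $M_1(G)$ into this relation.

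First I would recall why such a bound on $GA_1(G)+GA_1(\mathcal{L}(G))$ in terms of $M_1(G)$ is natural, since this motivates the invocation of \cite[Corollary 2]{PST}. For any graph $H$ with $m_H$ edges, the arithmetic--geometric mean inequality gives $\frac{2\sqrt{d_u d_v}}{d_u+d_v}\le 1$ for each $uv\in E(H)$, hence $GA_1(H)\le m_H$. Applying this to $H=G$ yields $GA_1(G)\le m$, and applying it to $H=\mathcal{L}(G)$ yields $GA_1(\mathcal{L}(G))\le m_{\mathcal{L}(G)}$. Since the number of edges of the line graph satisfies $m_{\mathcal{L}(G)}=\sum_{v\in V(G)}\binom{d_v}{2}=\tfrac12\big(M_1(G)-2m\big)$, adding the two estimates collapses the $m$ terms and produces precisely $GA_1(G)+GA_1(\mathcal{L}(G))\le \tfrac12 M_1(G)$, which is the form recorded in \cite[Corollary 2]{PST}.

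With that relation in hand, the remaining step is immediate: by Theorem \ref{t:m1},
$$
M_1(G) \le \max \big\{2\Delta^2 +m^2 +(6-2\Delta) m - 2\Delta -4\,,\; 2\Delta^2 +m^2 +(4-2\Delta) m + 4 \,,\; m(m-1) \big\},
$$
and multiplying through by $\tfrac12$ gives exactly the asserted bound on $GA_1(G) + GA_1(\mathcal{L}(G))$. The hypothesis that $G$ be non-trivial is what guarantees $\mathcal{L}(G)$ has edges, so that \cite[Corollary 2]{PST} is applicable; this is why the corollary, unlike Theorem \ref{t:m1}, carries the non-triviality assumption.

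I do not expect a genuine obstacle here, since the argument is a direct composition of two inequalities. The only point requiring care is verifying that the cited result \cite[Corollary 2]{PST} is stated in the form $GA_1(G)+GA_1(\mathcal{L}(G))\le \tfrac12 M_1(G)$ and under hypotheses compatible with ours (a non-trivial finite simple graph); once that is confirmed, the conclusion follows by pure substitution with no further estimation.
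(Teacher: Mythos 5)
Your proposal is correct and follows the paper's own route exactly: the paper derives this corollary by combining \cite[Corollary 2]{PST}, which gives $GA_1(G)+GA_1(\mathcal{L}(G))\le \tfrac12 M_1(G)$, with the bound on $M_1(G)$ from Theorem \ref{t:m1}. Your additional sketch of why the cited inequality holds (via $GA_1(H)\le m_H$ and $m_{\mathcal{L}(G)}=\tfrac12(M_1(G)-2m)$) is sound and consistent with the identities the paper records in Theorem \ref{t:line12}.
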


The Platt number of a graph $G$ is defined (see, e.g., \cite{H}) as
$$
P(G) = \sum_{ uv\in E(G) } (d_{u} + d_{v}-2) \, .
$$

\begin{theorem} \label{t:line12}
If $G$ is a non-trivial graph with $m$ edges, then
$$
m_{\mathcal{L}(G)}
= \frac12 \, P(G) ,
\qquad
P(G)
= M_1(G)-2m ,
\qquad
m
= \frac12 \, \big(M_1(G)-P(G) \big).
$$
\end{theorem}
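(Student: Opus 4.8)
The plan is to prove the three identities in sequence, establishing the middle one first because it is purely algebraic, deriving the third from it immediately, and then handling the line-graph edge count, which is the only part requiring a structural argument about $\mathcal{L}(G)$.

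For the identity $P(G) = M_1(G) - 2m$, I would expand the definition of the Platt number directly and split the sum:
$$
P(G) = \sum_{uv \in E(G)}(d_u + d_v - 2) = \sum_{uv \in E(G)}(d_u + d_v) - \sum_{uv \in E(G)} 2 = M_1(G) - 2m,
$$
where the first sum equals $M_1(G)$ by the edge-form definition of the first Zagreb index recalled in the introduction, and the second sum equals $2m$ since $G$ has exactly $m$ edges. The third identity $m = \frac12 \, (M_1(G) - P(G))$ then follows simply by solving this relation for $m$.

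The remaining identity $m_{\mathcal{L}(G)} = \frac12 \, P(G)$ is where the structure of the line graph enters. The key step is to compute the degree in $\mathcal{L}(G)$ of the vertex corresponding to an edge $e = uv$ of $G$. Since the vertices of $\mathcal{L}(G)$ are the edges of $G$, and two are adjacent precisely when they share an endpoint, this degree counts the edges of $G$ sharing an endpoint with $e$. I would argue that the edges incident to $u$ other than $e$ and those incident to $v$ other than $e$ form two disjoint families: any edge lying in both would share both endpoints $u$ and $v$ with $e$, forcing a multiple edge, which is excluded because $G$ is simple. Hence the degree of $e$ in $\mathcal{L}(G)$ equals $(d_u - 1) + (d_v - 1) = d_u + d_v - 2$.

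Finally, applying the handshake lemma to $\mathcal{L}(G)$ gives
$$
2\, m_{\mathcal{L}(G)} = \sum_{e \in V(\mathcal{L}(G))} \deg_{\mathcal{L}(G)}(e) = \sum_{uv \in E(G)}(d_u + d_v - 2) = P(G),
$$
and dividing by $2$ yields the claim. The main (indeed the only genuine) obstacle is justifying the degree formula $d_u + d_v - 2$ for a line-graph vertex, specifically the disjointness of the two incidence families, which rests on $G$ being simple; the two remaining identities are direct substitution and elementary algebra.
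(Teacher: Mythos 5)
Your proposal is correct and follows essentially the same route as the paper: the paper's proof is a single chain of equalities combining the degree formula $d_{uv}=d_u+d_v-2$ for a line-graph vertex, the handshake lemma in $\mathcal{L}(G)$, and the edge-form of $M_1$. You merely separate the steps and additionally justify the degree formula (via simplicity of $G$), which the paper asserts without proof.
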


\begin{proof}
Since the vertex in $\mathcal{L}(G)$ corresponding to $uv\in E(G)$ has degree $d_u + d_v - 2$, we have
$$
P(G)
= \sum_{ uv\in E(G) } (d_{u} + d_{v}-2)
= \sum_{ uv\in V(\mathcal{L}(G)) } d_{uv}
= 2 m_{\mathcal{L}(G))}
= M_1(G)-2m ,
$$
and the three equalities hold.
\end{proof}

Proposition 3 in \cite{PST} has the following consequence:

\begin{corollary} \label{c:line12}
If $G$ is a non-trivial graph with maximum degree $\D$ and minimum degree $\d$, then
$$
\begin{aligned}
\frac{ \sqrt{(\D-1)( \d-1)} }{\D + \d-2} \, P(G)
& \le GA_1(\mathcal{L}(G))
\le \frac12 \, P(G) ,
\\
\frac{ \sqrt{\D \d} }{\D + \d} \, \big(M_1(G)-P(G) \big)
& \le GA_1(G)
\le \frac12 \, \big(M_1(G)-P(G) \big).
\end{aligned}
$$
\end{corollary}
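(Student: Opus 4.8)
The plan is to invoke Proposition 3 of \cite{PST}, which bounds the geometric-arithmetic index of an arbitrary graph in terms of its number of edges and its extremal degrees,
$$
\frac{2\sqrt{\D\d}}{\D+\d}\, m \le GA_1(G) \le m ,
$$
and to apply it twice: once to $G$ and once to its line graph $\mathcal{L}(G)$. In each case the resulting quantities ($m$, $m_{\mathcal{L}(G)}$, and the coefficients built from $\D,\d$ or $\D_{\mathcal{L}(G)},\d_{\mathcal{L}(G)}$) are then rewritten through the identities of Theorem \ref{t:line12}, namely $m_{\mathcal{L}(G)}=\tfrac12 P(G)$ and $2m = M_1(G)-P(G)$.

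The second chain of inequalities is the direct one. Feeding $G$ into Proposition 3 of \cite{PST} gives $GA_1(G)\le m$ and $GA_1(G)\ge \frac{2\sqrt{\D\d}}{\D+\d}\,m=\frac{\sqrt{\D\d}}{\D+\d}\,(2m)$. Substituting $2m=M_1(G)-P(G)$ from Theorem \ref{t:line12} into the upper bound $m=\tfrac12(M_1(G)-P(G))$ and into the lower coefficient produces exactly the displayed bounds on $GA_1(G)$, with no further work.

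For the first chain I would apply the same proposition to $\mathcal{L}(G)$, obtaining
$$
\frac{2\sqrt{\D_{\mathcal{L}(G)}\,\d_{\mathcal{L}(G)}}}{\D_{\mathcal{L}(G)}+\d_{\mathcal{L}(G)}}\, m_{\mathcal{L}(G)} \le GA_1(\mathcal{L}(G)) \le m_{\mathcal{L}(G)} .
$$
The upper bound is immediate: Theorem \ref{t:line12} gives $m_{\mathcal{L}(G)}=\tfrac12 P(G)$, which is the asserted $\tfrac12 P(G)$. The lower bound is where the real content lies. Since the vertex of $\mathcal{L}(G)$ attached to $uv\in E(G)$ has degree $d_u+d_v-2$, one has $\d_{\mathcal{L}(G)}\ge 2\d-2$ and $\D_{\mathcal{L}(G)}\le 2\D-2$, so the degree ratio in the line graph obeys $\D_{\mathcal{L}(G)}/\d_{\mathcal{L}(G)}\le (\D-1)/(\d-1)$. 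I would then exploit that $\frac{2\sqrt{ab}}{a+b}$ depends on $a,b$ only through the ratio $a/b$ and that $t\mapsto \frac{2\sqrt t}{1+t}$ is decreasing on $[1,\infty)$; hence the coefficient in front of $m_{\mathcal{L}(G)}$ is minimized at the most unbalanced admissible ratio $(\D-1)/(\d-1)$, giving
$$
\frac{2\sqrt{\D_{\mathcal{L}(G)}\,\d_{\mathcal{L}(G)}}}{\D_{\mathcal{L}(G)}+\d_{\mathcal{L}(G)}} \ge \frac{2\sqrt{(\D-1)(\d-1)}}{\D+\d-2} .
$$
Multiplying by $m_{\mathcal{L}(G)}=\tfrac12 P(G)$ yields the claimed lower bound $\frac{\sqrt{(\D-1)(\d-1)}}{\D+\d-2}\,P(G)$.

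The main obstacle is precisely this monotonicity-in-the-ratio step for the line graph, which I expect to be the only non-formal part: one must verify that $\frac{2\sqrt t}{1+t}$ is genuinely decreasing for $t\ge1$ and that passing from $\D_{\mathcal{L}(G)},\d_{\mathcal{L}(G)}$ to the extremal ratio is a legitimate lower bound. I would also dispatch the degenerate case $\d=1$ separately: there $\d-1=0$ forces the right-hand coefficient to vanish, so the inequality holds trivially, whereas for $\d\ge2$ non-triviality of $G$ guarantees $\mathcal{L}(G)$ has no isolated vertices and hence $\d_{\mathcal{L}(G)}>0$, legitimizing the ratio argument.
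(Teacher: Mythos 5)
Your proposal is correct and follows essentially the same route as the paper: the corollary is obtained by applying the $GA_1$ versus edge-number bounds of Proposition 3 in \cite{PST} to $G$ and to $\mathcal{L}(G)$, and then substituting the identities $m_{\mathcal{L}(G)}=\tfrac12 P(G)$ and $2m=M_1(G)-P(G)$ from Theorem \ref{t:line12}. Your explicit verification of the line-graph coefficient via $\D_{\mathcal{L}(G)}\le 2\D-2$, $\d_{\mathcal{L}(G)}\ge 2\d-2$ and the monotonicity of $t\mapsto \frac{2\sqrt{t}}{1+t}$ on $[1,\infty)$ is sound and simply makes explicit what the cited proposition already packages.
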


The study of Gromov hyperbolic graphs is a subject of increasing interest, both in pure and applied mathematics (see, e.g., \cite{BRS},
\cite{BKM}, \cite{J}, \cite{MRSV}, \cite{RSVV} and the references cited therein).
We say that a graph $G$ is $t$-\emph{hyperbolic} $(t\ge 0)$ if any side of every geodesic triangle in $G$
is contained in the $t$-neighborhood of the union of the other two sides.
We define the \emph{hyperbolicity constant} $\d(G)$ of a graph $G$ as the infimum of the constants $t\ge 0$ such that $G$ is $t$-hyperbolic.
We consider that every edge has length $1$.

The following inequality relates the geometric-arithmetic index of $\mathcal{L}(G)$ with the hyperbolicity constant of $G$.

\begin{theorem} \label{t:delta}
If $G$ is a non-trivial graph that is not a tree, then
$$
GA_1(\mathcal{L}(G)) \ge \frac{(4\d(G)-1)^{3/2}}{2\d(G)}\, .
$$
\end{theorem}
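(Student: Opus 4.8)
The plan is to bound $GA_1(\mathcal{L}(G))$ from below using only the \emph{order} of $\mathcal{L}(G)$, namely the number $m$ of edges of $G$, and then to feed in a hyperbolicity estimate forcing $m$ to be large once $\d(G)$ is large. Since the hyperbolicity constant of a graph equals the supremum of the hyperbolicity constants of its connected components, and $GA_1(\mathcal{L}(G))$ is a sum of nonnegative terms, I would first reduce to the connected case: replace $G$ by the component $G_0$ attaining $\d(G_0)=\d(G)$ (this component is not a tree, and the remaining components contribute nonnegatively). For such a component $\mathcal{L}(G_0)$ is connected, so I may assume $\mathcal{L}(G)$ is connected with $N:=n_{\mathcal{L}(G)}=m\ge 3$ vertices, the last inequality because $G$ has a cycle.

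The elementary core is a lower bound for $GA_1$ of an arbitrary connected graph in terms of its order alone. Every vertex of $\mathcal{L}(G)$ has degree between $1$ and $N-1$, and a connected graph on $N$ vertices has at least $N-1$ edges. Writing each summand as $\frac{2\sqrt{d_e d_f}}{d_e+d_f}=\frac{2\sqrt{s}}{1+s}$ with $s=\min(d_e,d_f)/\max(d_e,d_f)$, the map $s\mapsto \frac{2\sqrt{s}}{1+s}$ is increasing on $(0,1]$, so each term is minimized at the most unbalanced ratio $\{d_e,d_f\}=\{1,N-1\}$, giving the per-edge bound $\frac{2\sqrt{N-1}}{N}$. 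Multiplying by the number of edges yields
$$
GA_1(\mathcal{L}(G))\ \ge\ (N-1)\,\frac{2\sqrt{N-1}}{N}\ =\ \frac{2(N-1)^{3/2}}{N}\,.
$$

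To finish I would invoke the hyperbolicity input $m\ge 4\d(G)$ (equivalently $\d(G)\le m/4$), a known estimate from the cited hyperbolicity literature (see \cite{RSVV,MRSV}), which I expect to recover from $4\d(G)\le c(G)\le m$, where $c(G)$ denotes the circumference of $G$. A direct derivative computation shows that $x\mapsto \frac{2(x-1)^{3/2}}{x}$ is increasing for $x>1$; since $G$ is not a tree we have $4\d(G)\ge 3>1$, so substituting first the admissible value $x=m$ and then the smaller value $x=4\d(G)$ gives
$$
GA_1(\mathcal{L}(G))\ \ge\ \frac{2(m-1)^{3/2}}{m}\ \ge\ \frac{2(4\d(G)-1)^{3/2}}{4\d(G)}\ =\ \frac{(4\d(G)-1)^{3/2}}{2\d(G)}\,,
$$
which is the desired inequality.

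The main obstacle is pinning down and correctly citing the hyperbolicity estimate $m\ge 4\d(G)$: it is the only non-elementary ingredient, and its constant $4$ is precisely what makes the substitution land on the stated exponent $3/2$ and denominator $2\d(G)$. Everything else — the extremal analysis of a single summand $\frac{2\sqrt{d_ed_f}}{d_e+d_f}$ and the monotonicity of $x\mapsto 2(x-1)^{3/2}/x$ — is routine.
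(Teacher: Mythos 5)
Your proposal is correct and follows essentially the same route as the paper: both arguments reduce to the lower bound $GA_1(\mathcal{L}(G))\ge \frac{2(m-1)^{3/2}}{m}$, then combine the hyperbolicity estimate $\d(G)\le m/4$ from \cite{RSVV} with the monotonicity of $x\mapsto 2(x-1)^{3/2}/x$ on $[1,\infty)$. The only differences are cosmetic: you derive the intermediate bound elementarily (minimizing each summand at the degree pair $\{1,m-1\}$ and counting at least $m-1$ edges in the connected line graph) where the paper simply cites \cite[Corollary 4]{PST}, and your assertion $4\d(G)\ge 3$ ultimately rests on the quantization facts that $\d(G)$ is a multiple of $\tfrac14$ and cannot equal $\tfrac14$ or $\tfrac12$, which the paper makes explicit via \cite{BRS} and \cite{MRSV}.
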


\begin{proof}
Since $G$ is not a tree, we have $\d(G)>0$.
Also, $\d(G)$ is always an integer multiple of $\frac14$ by \cite[Theorem 2.6]{BRS}
and $\d(G)\notin \{\frac14\,,\frac12\}$ by \cite[Theorem 11]{MRSV}, since $G$ is a simple graph.
Hence, $\d(G)\ge 3/4$.

The function $f(x)=\frac{2(x-1)^{3/2}}{x}$ is increasing in $[1,\infty)$, since
$$
f'(x)= \frac{(x-1)^{1/2}}{x^2} \big(x+2\big) >0
$$
for every $x \in (1,\infty)$.
By \cite[Corollary 4]{PST} we get that
$$
GA_1(\mathcal{L}(G)) \ge \frac{2(m-1)^{3/2}}{m}\, .
$$
Since $\d(G) \le m/4$ by \cite[Corollary 20]{RSVV}, we conclude
$$
GA_1(\mathcal{L}(G)) \ge \frac{2(m-1)^{3/2}}{m} \ge \frac{2(4\d(G)-1)^{3/2}}{4\d(G)}\, ,
$$
since $m \ge 4\d(G) \ge 3$.
\end{proof}

The following result in \cite{gaCadiz} relates the geometric-arithmetic and the first Zagreb indices.

\begin{theorem} \label{t:gam}
If $G$ is a graph with $m$ edges, maximum degree $\D$ and minimum degree $\d$, then
$$
GA_1(G)
\ge \min \Big\{ \, \frac{1}{2 \D} \,,\, \frac{2\sqrt{ \D\d}}{(\D+\d)^2} \, \Big\} M_1(G) ,
$$
and the equality is attained if $G$ is a regular graph.
\end{theorem}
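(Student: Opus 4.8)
The plan is to prove the inequality edge by edge and then sum over $E(G)$. Writing
$$
GA_1(G) = \sum_{uv\in E(G)} \frac{2\sqrt{d_u d_v}}{d_u + d_v}, \qquad M_1(G) = \sum_{uv\in E(G)} (d_u+d_v),
$$
it suffices to find the largest constant $c$ for which the pointwise estimate $\frac{2\sqrt{d_u d_v}}{d_u+d_v} \ge c\,(d_u+d_v)$ holds for every edge $uv$; summing such an estimate immediately yields $GA_1(G)\ge c\,M_1(G)$. Dividing by $d_u+d_v$, this amounts to requiring $g(d_u,d_v)\ge c$, where
$$
g(x,y) = \frac{2\sqrt{xy}}{(x+y)^2}.
$$
Since $\d\le d_u,d_v\le\D$ on every edge, the whole problem reduces to computing the minimum of $g$ on the square $[\d,\D]^2$, and I will show that this minimum equals $\min\{\frac{1}{2\D},\,\frac{2\sqrt{\D\d}}{(\D+\d)^2}\}$, which is the desired value of $c$.

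First I would rule out interior extrema. Working with $\log g = \log 2 + \tfrac12\log x + \tfrac12\log y - 2\log(x+y)$ gives
$$
\frac{\p \log g}{\p x} = \frac{1}{2x} - \frac{2}{x+y} = \frac{y-3x}{2x(x+y)}, \qquad \frac{\p \log g}{\p y} = \frac{x-3y}{2y(x+y)}.
$$
A common zero would force $y=3x$ and $x=3y$ simultaneously, which is impossible for positive degrees; hence $g$ has no critical point in the interior, and its minimum is attained on the boundary of the square.

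Next I would examine the four sides. On the side $x=\d$, one has $\frac{\p \log g}{\p y} = \frac{\d-3y}{2y(\d+y)}<0$ for all $y\ge\d$ (since $\d>\d/3$), so $g(\d,\,\cdot\,)$ is strictly decreasing and attains its minimum at $y=\D$, namely $g(\d,\D)=\frac{2\sqrt{\D\d}}{(\D+\d)^2}$; the side $y=\d$ behaves identically by the symmetry $g(x,y)=g(y,x)$. On the side $x=\D$, the derivative $\frac{\D-3y}{2y(\D+y)}$ changes sign only at $y=\D/3$, where $g$ has a local maximum, so the minimum of $g(\D,\,\cdot\,)$ over $[\d,\D]$ occurs at an endpoint, either $g(\D,\d)=\frac{2\sqrt{\D\d}}{(\D+\d)^2}$ or $g(\D,\D)=\frac{1}{2\D}$; the side $y=\D$ is the same by symmetry. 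Collecting the candidates, $\min_{[\d,\D]^2} g = \min\{\frac{1}{2\D},\,\frac{2\sqrt{\D\d}}{(\D+\d)^2}\}$, which is the asserted constant $c$, and summation finishes the bound.

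Finally, for the equality statement, if $G$ is $k$-regular then $\D=\d=k$, so $g\equiv\frac{1}{2k}$ on every edge; here $GA_1(G)=m$ and $M_1(G)=2km$, whence $\frac{1}{2k}\,M_1(G)=m=GA_1(G)$. The main obstacle is the boundary analysis of $g$: unlike the side $x=\d$, the function $g(\D,\,\cdot\,)$ is \emph{not} monotone (it has an interior maximum at $y=\D/3$ whenever $\d<\D/3$), so some care is needed to reduce to the endpoints and to confirm that no value smaller than the two corner values $g(\D,\D)$ and $g(\d,\D)$ can occur.
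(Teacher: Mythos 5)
Your argument is correct: the reduction to the pointwise inequality $2\sqrt{d_ud_v}/(d_u+d_v)\ge c\,(d_u+d_v)$, the computation $\p_x\log g=(y-3x)/(2x(x+y))$ ruling out interior critical points of $g(x,y)=2\sqrt{xy}/(x+y)^2$ on $[\d,\D]^2$, and the boundary analysis (monotone on the sides $x=\d$, $y=\d$; interior maximum, hence endpoint minimum, on the sides $x=\D$, $y=\D$) correctly identify $\min_{[\d,\D]^2}g=\min\big\{\frac{1}{2\D},\frac{2\sqrt{\D\d}}{(\D+\d)^2}\big\}$, and the regular case gives equality as you say. Note that the paper itself states Theorem \ref{t:gam} without proof, citing it from \cite{gaCadiz}, so there is no in-paper argument to compare against; your edge-wise minimization is the natural proof of this kind of bound and is complete as written.
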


In \cite{PST} appears the following result.

\begin{theorem} \label{t:line10}
If $G$ is a non-trivial graph with $m$ edges, then
$$
M_1(\mathcal{L}(G))
= 4m - 4M_1(G) +2 M_2(G) + F(G) .
$$
\end{theorem}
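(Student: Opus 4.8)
The plan is to compute $M_1(\mathcal{L}(G))$ directly from the definition as a sum of squared degrees over the vertices of the line graph. First I would recall the degree formula already used in the proof of Theorem \ref{t:line12}: the vertex of $\mathcal{L}(G)$ corresponding to an edge $uv\in E(G)$ has degree $d_u+d_v-2$, since its neighbors in $\mathcal{L}(G)$ are exactly the $d_u-1$ edges sharing the endpoint $u$ (besides $uv$) together with the $d_v-1$ edges sharing $v$. Because $M_1$ of any graph equals the sum of the squares of its vertex degrees, this immediately yields
$$
M_1(\mathcal{L}(G)) = \sum_{uv\in E(G)} (d_u+d_v-2)^2 .
$$

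Next I would expand the square as
$$
(d_u+d_v-2)^2 = (d_u^2+d_v^2) + 2\,d_u d_v - 4(d_u+d_v) + 4 ,
$$
and split the sum over edges into four pieces. By definition, $\sum_{uv\in E(G)} 2\,d_u d_v = 2M_2(G)$ and $-4\sum_{uv\in E(G)}(d_u+d_v) = -4M_1(G)$, while the constant term gives $4\sum_{uv\in E(G)} 1 = 4m$. The remaining piece $\sum_{uv\in E(G)}(d_u^2+d_v^2)$ is precisely the forgotten index $F(G)$, using the identity $F(G)=\sum_{v\in V(G)} d_v^3 = \sum_{uv\in E(G)}(d_u^2+d_v^2)$, which follows by counting each vertex once for every edge incident to it. Collecting the four contributions gives
$$
M_1(\mathcal{L}(G)) = F(G) + 2M_2(G) - 4M_1(G) + 4m ,
$$
which is exactly the claimed identity after rearrangement.

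The computation presents essentially no obstacle; the only points requiring care are verifying the degree formula $d_u+d_v-2$ (which uses that $G$ is simple, so that distinct edges meeting at a common vertex give distinct neighbors in $\mathcal{L}(G)$) and rewriting $\sum_{uv\in E(G)}(d_u^2+d_v^2)$ as $F(G)$. The non-triviality hypothesis merely guarantees that $\mathcal{L}(G)$ actually has edges, so that $M_1(\mathcal{L}(G))$ is a meaningful quantity; it plays no further role in the algebra.
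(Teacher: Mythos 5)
Your computation is correct and is the standard direct argument: the degree formula $d_{uv}=d_u+d_v-2$ for simple graphs, expansion of the square, and the identities $\sum_{uv\in E(G)}(d_u^2+d_v^2)=F(G)$, $\sum_{uv\in E(G)}d_ud_v=M_2(G)$, $\sum_{uv\in E(G)}(d_u+d_v)=M_1(G)$ give exactly the claimed formula. The paper itself states this result without proof (it is imported from the reference \cite{PST}), and your derivation is precisely the canonical proof of that cited identity, so there is nothing to fault.
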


\begin{proposition} \label{c:gamlinea}
If $G$ is a non-trivial graph with $m$ edges, maximum degree $\D$ and minimum degree $\d$, then
$$
GA_1(\mathcal{L}(G))
\ge \min \Big\{ \, \frac{1}{4(\D-1)} \,,\, \frac{\sqrt{ (\D-1)(\d-1)}}{(\D+\d-2)^2} \, \Big\} \big(4m - 4M_1(G) +2 M_2(G) + F(G)\big) .
$$
\end{proposition}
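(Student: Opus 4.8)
The plan is to apply Theorem \ref{t:gam} not to $G$ itself but to its line graph $\mathcal{L}(G)$, and then to rewrite the resulting bound via Theorem \ref{t:line10}. Since $G$ is non-trivial, every vertex of $\mathcal{L}(G)$ has positive degree, so $GA_1(\mathcal{L}(G))$ and $M_1(\mathcal{L}(G))$ are well defined, and Theorem \ref{t:gam} gives
\[
GA_1(\mathcal{L}(G)) \ge \min\Big\{\frac{1}{2\D_{\mathcal{L}(G)}},\; \frac{2\sqrt{\D_{\mathcal{L}(G)}\,\d_{\mathcal{L}(G)}}}{(\D_{\mathcal{L}(G)}+\d_{\mathcal{L}(G)})^2}\Big\}\, M_1(\mathcal{L}(G)).
\]
Theorem \ref{t:line10} then replaces $M_1(\mathcal{L}(G))$ by $4m-4M_1(G)+2M_2(G)+F(G)$, so it only remains to bound the coefficient below by $\min\{\tfrac{1}{4(\D-1)},\,\tfrac{\sqrt{(\D-1)(\d-1)}}{(\D+\d-2)^2}\}$. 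This replacement preserves the inequality because $M_1(\mathcal{L}(G))=\sum_{w\in V(\mathcal{L}(G))} d_w^2\ge 0$.

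The key input for the coefficient is the degree range in $\mathcal{L}(G)$: the vertex of $\mathcal{L}(G)$ associated with the edge $uv\in E(G)$ has degree $d_u+d_v-2$, which lies in $[2(\d-1),\,2(\D-1)]$, whence $2(\d-1)\le \d_{\mathcal{L}(G)}\le \D_{\mathcal{L}(G)}\le 2(\D-1)$. Writing $\varphi(x,y)=\frac{2\sqrt{xy}}{(x+y)^2}$ for the quotient of a typical $GA_1$-summand by the corresponding $M_1$-summand, the coefficient above is exactly $\min_{x,y\in[\d_{\mathcal{L}(G)},\D_{\mathcal{L}(G)}]}\varphi(x,y)$; this is precisely what the proof of Theorem \ref{t:gam} establishes, namely that the minimum of $\varphi$ over a degree square $[c,d]^2$ is attained at the corner $(d,d)$ or $(c,d)$, yielding $\min\{\tfrac1{2d},\tfrac{2\sqrt{cd}}{(c+d)^2}\}$. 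Because the degree square of $\mathcal{L}(G)$ is contained in $[2(\d-1),2(\D-1)]^2$, minimizing over the larger square can only decrease the value, so
\[
\min_{x,y\in[\d_{\mathcal{L}(G)},\D_{\mathcal{L}(G)}]}\varphi(x,y) \;\ge\; \min_{x,y\in[2(\d-1),2(\D-1)]}\varphi(x,y).
\]
Evaluating the right-hand side at the two relevant corners gives $\varphi\big(2(\D-1),2(\D-1)\big)=\tfrac{1}{4(\D-1)}$ and $\varphi\big(2(\d-1),2(\D-1)\big)=\tfrac{\sqrt{(\D-1)(\d-1)}}{(\D+\d-2)^2}$, which is exactly the claimed minimum; combining this with Theorem \ref{t:line10} finishes the proof.

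The step that needs the most care is precisely this coefficient comparison. One is tempted to bound the two entries of the minimum separately, i.e.\ $\tfrac1{2\D_{\mathcal{L}(G)}}\ge\tfrac1{4(\D-1)}$ and $\tfrac{2\sqrt{\D_{\mathcal{L}(G)}\d_{\mathcal{L}(G)}}}{(\D_{\mathcal{L}(G)}+\d_{\mathcal{L}(G)})^2}\ge\tfrac{\sqrt{(\D-1)(\d-1)}}{(\D+\d-2)^2}$. The first bound is immediate from $\D_{\mathcal{L}(G)}\le 2(\D-1)$, but the second can fail (for example when $\mathcal{L}(G)$ is regular, so that $\d_{\mathcal{L}(G)}=\D_{\mathcal{L}(G)}$ is close to $2(\D-1)$ while $\d$ is much smaller than $\D$). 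This is why the argument must proceed through the monotonicity of $\min_{[c,d]^2}\varphi$ under domain inclusion rather than through an entrywise comparison: when the geometric-mean entry is not the dominant one, it is the $\tfrac{1}{2d}$ entry that governs both minima, and the nested-square argument handles both regimes uniformly.
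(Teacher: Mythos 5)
Your proposal is correct and follows essentially the same route as the paper: apply Theorem \ref{t:gam} to $\mathcal{L}(G)$, pass from the degrees of $\mathcal{L}(G)$ to those of $G$ via $2\d-2\le\d_{\mathcal{L}(G)}\le\D_{\mathcal{L}(G)}\le 2\D-2$, and substitute $M_1(\mathcal{L}(G))$ using Theorem \ref{t:line10}. Your careful justification of the coefficient comparison (rerunning the minimization of the proof of Theorem \ref{t:gam} over the larger degree square rather than comparing the two entries of the minimum separately) is exactly the point the paper compresses into the phrase ``the argument in the proof of Theorem \ref{t:gam} gives, in fact,'' and your remark on why the entrywise comparison can fail is a valid and worthwhile clarification.
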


\begin{proof}
Theorem \ref{t:gam} gives
$$
GA_1(\mathcal{L}(G))
\ge \min \Big\{ \, \frac{1}{2 \D_{\mathcal{L}(G)}} \,,\, \frac{2\sqrt{ \D_{\mathcal{L}(G)}\d_{\mathcal{L}(G)}}}{(\D_{\mathcal{L}(G)}+\d_{\mathcal{L}(G)})^2} \, \Big\} M_1(\mathcal{L}(G)) .
$$
Since
$\D_{\mathcal{L}(G)} \le 2\D-2$ and $\d_{\mathcal{L}(G)} \ge 2\d-2$,
the argument in the proof of Theorem \ref{t:gam} gives, in fact,
$$
GA_1(\mathcal{L}(G))
\ge \min \Big\{ \, \frac{1}{4(\D-1)} \,,\, \frac{\sqrt{ (\D-1)(\d-1)}}{(\D+\d-2)^2} \, \Big\} M_1(\mathcal{L}(G)) .
$$
and Theorem \ref{t:line10} allows to conclude
$$
GA_1(\mathcal{L}(G))
\ge \min \Big\{ \, \frac{1}{4(\D-1)} \,,\, \frac{\sqrt{ (\D-1)(\d-1)}}{(\D+\d-2)^2} \, \Big\} \big(4m - 4M_1(G) +2 M_2(G) + F(G)\big) .
$$
\end{proof}

\section{Harmonic index of line graphs}

The following result appears in \cite[Theorem 2.2]{Liu13}.

\begin{theorem} \label{t:24}
If $G$ is a graph with $n$ vertices, then
$$
H(G) \leq \dfrac{n}{2} \,,
$$
with equality if and only if every connected component of $G$ is regular.
\end{theorem}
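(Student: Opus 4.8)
The plan is to bound each edge-term of $H(G)$ by a symmetric per-vertex quantity and then sum. The key elementary observation is that for any positive reals $a,b$ one has $\frac{2}{a+b}\le \frac12\big(\frac1a+\frac1b\big)$, which upon clearing denominators is equivalent to $4ab\le(a+b)^2$, i.e.\ to $(a-b)^2\ge 0$; equality holds precisely when $a=b$. Applying this with $a=d_u$, $b=d_v$ for each edge $uv\in E(G)$ (the standing convention that every component has an edge rules out isolated vertices, so all degrees are positive and every $1/d_v$ is well defined), I would obtain
$$
H(G)=\sum_{uv\in E(G)}\frac{2}{d_u+d_v}\le \frac12\sum_{uv\in E(G)}\Big(\frac{1}{d_u}+\frac{1}{d_v}\Big).
$$

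Next I would evaluate the right-hand sum by a handshaking-type rearrangement, grouping the contributions by vertex rather than by edge. Each $v\in V(G)$ lies in exactly $d_v$ edges and contributes $1/d_v$ to each, so
$$
\sum_{uv\in E(G)}\Big(\frac{1}{d_u}+\frac{1}{d_v}\Big)=\sum_{v\in V(G)}d_v\cdot\frac{1}{d_v}=\sum_{v\in V(G)}1=n.
$$
Combining the two displays gives $H(G)\le n/2$ immediately.

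For the equality characterization, the inequality above is an equality if and only if each individual term is, i.e.\ if and only if $d_u=d_v$ for every edge $uv\in E(G)$. The forward implication is immediate: if every connected component is regular, then the two endpoints of each edge share a common degree and each term attains equality, so $H(G)=n/2$. For the converse I would fix a connected component $C$ and show that any two of its vertices have equal degree by walking along a path joining them, using that the degree is constant across each edge of the path; hence the degree is constant on all of $C$, so $C$ is regular. The proof is short and presents no serious obstacle; the only point requiring care is this equality analysis, specifically propagating the edgewise condition $d_u=d_v$ into global regularity of each component via the connectivity argument, while relying on the paper's convention to avoid degree-zero vertices.
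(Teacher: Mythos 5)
Your proof is correct and complete: the edgewise bound $\frac{2}{d_u+d_v}\le\frac12\bigl(\frac1{d_u}+\frac1{d_v}\bigr)$, the handshake-style regrouping to get $n$, and the propagation of the equality condition $d_u=d_v$ along paths within each component are all sound, and you rightly invoke the paper's standing convention excluding isolated vertices so that every $1/d_v$ is defined and the vertex sum really equals $n$. Note that the paper does not prove this statement itself but quotes it from \cite[Theorem 2.2]{Liu13}; your argument is the standard one for this result, so there is nothing to compare beyond that.
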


Since $n_{\mathcal{L}(G)}=m$,
Theorem \ref{t:24} has the following consequence.

\begin{corollary} \label{c:24}
If $G$ is a non-trivial graph with $m$ edges, then
$$
H(\mathcal{L}(G)) \leq \dfrac{m}{2} \,,
$$
with equality if and only if each connected component of $G$ is regular or biregular.
\end{corollary}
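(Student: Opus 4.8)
The plan is to deduce this directly from Theorem \ref{t:24} applied to the line graph $\mathcal{L}(G)$, and then translate the resulting equality condition back to $G$. Since $n_{\mathcal{L}(G)}=m$, Theorem \ref{t:24} gives at once $H(\mathcal{L}(G)) \le m/2$, so the inequality itself requires no extra work; all the content is in characterizing when equality holds.

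By the equality clause of Theorem \ref{t:24}, equality holds if and only if every connected component of $\mathcal{L}(G)$ is regular. First I would recall the two standard facts I intend to use. The line graph of a connected graph is connected, so the connected components of $\mathcal{L}(G)$ are precisely the line graphs of the connected components of $G$ (each of which has at least one edge since $G$ is non-trivial). Moreover, as already noted in the proof of Theorem \ref{t:line12}, the vertex of $\mathcal{L}(G)$ corresponding to an edge $uv\in E(G)$ has degree $d_u+d_v-2$. Consequently, a given component of $\mathcal{L}(G)$ is regular exactly when $d_u+d_v$ takes a single constant value over all edges $uv$ of the corresponding component of $G$.

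The crux is therefore the following elementary claim: for a connected graph $C$, the quantity $d_u+d_v$ is constant along every edge $uv\in E(C)$ if and only if $C$ is regular or biregular. For the forward direction I would fix a vertex $v_0$ of degree $a$ and let $k$ be the common edge-value; then every neighbor of $v_0$ has degree $k-a$, every neighbor of those has degree $a$, and by connectivity an induction on distance from $v_0$ shows that every vertex of $C$ has degree $a$ or $k-a$. If $a=k-a$ then $C$ is $a$-regular. If $a\neq k-a$, then an edge joining two vertices of equal degree would force $d_u+d_v\in\{2a,\,2(k-a)\}\neq k$; hence every edge joins a vertex of degree $a$ to one of degree $k-a$, so $C$ is bipartite with parts of degrees $a$ and $k-a$, i.e.\ biregular. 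The converse is immediate: a regular graph has $d_u+d_v=2r$ on every edge, and a biregular bipartite graph with side-degrees $r$ and $s$ has $d_u+d_v=r+s$ on every edge.

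The main obstacle is not deep but lies entirely in this claim: one must verify that ``constant edge-sum'' on a connected graph collapses to exactly the two alternating degree values, and that the genuinely two-valued case is forced to be bipartite (which is what makes it biregular rather than merely having two degrees). Once the claim is established, applying it component by component and combining with the observations above shows that $H(\mathcal{L}(G))=m/2$ holds if and only if each connected component of $G$ is regular or biregular, which is exactly the asserted equality condition.
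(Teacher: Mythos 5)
Your proposal is correct and follows the same route as the paper, which simply derives the corollary from Theorem \ref{t:24} applied to $\mathcal{L}(G)$ using $n_{\mathcal{L}(G)}=m$ and the degree formula $d_{uv}=d_u+d_v-2$. In fact you supply more than the paper does: the paper leaves the translation of the equality condition implicit, whereas you correctly prove the key claim that a connected graph has constant $d_u+d_v$ over its edges if and only if it is regular or biregular (including the bipartiteness argument in the two-valued case), so your write-up is a complete and accurate justification of the stated equivalence.
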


%\begin{proof}
%We have
%$$
%\frac{2\d}{d_u + d_v}
%\le \frac{2\sqrt{d_u d_v}}{d_u + d_v}
%\le \frac{2\D}{d_u + d_v}\, ,
%$$
%for every $uv\in E(G)$.
%Hence,
%we obtain the inequalities by summing in $uv\in E(G)$.
%
%The equality in the first (respectively, second) inequality is attained if and only if
%$\sqrt{d_u d_v}=\d$ for every $uv\in E(G)$, i.e., $d_u=\d$ (respectively, $d_u=\D$) for every $u\in V(G)$.
%\end{proof}

We need the following technical result.

\begin{lemma} \label{l:line11}
We have for any positive integers $3 \le k\le \D$ and $1\le x_1,x_2, \dots,x_k \le \D$
$$
\frac{2}{k-1} \sum_{\substack{ 1\le i,j \le k \\ i<j}} \frac1{x_i+x_j+2k-4}
\le \sum_{j=1}^k \frac1{x_j+k}
\le 2\,\frac{\D+2k-3}{k^2-1} \sum_{\substack{ 1\le i,j \le k \\ i<j}} \frac1{x_i+x_j+2k-4} \, .
$$
\end{lemma}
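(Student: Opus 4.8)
The plan is to reduce both inequalities to term-by-term comparisons by first rewriting the double sum. Writing $T=\sum_{i<j}\frac{1}{x_i+x_j+2k-4}$, I would symmetrize it as $T=\frac12\sum_{j=1}^k\sum_{i\ne j}\frac{1}{x_i+x_j+2k-4}$ and estimate each inner sum over $i$ using only the crude bounds $1\le x_i\le\D$. Since every inner sum has exactly $k-1$ terms, the bound $x_i\ge 1$ gives $\sum_{i\ne j}\frac{1}{x_i+x_j+2k-4}\le\frac{k-1}{x_j+2k-3}$, while $x_i\le\D$ gives $\sum_{i\ne j}\frac{1}{x_i+x_j+2k-4}\ge\frac{k-1}{x_j+\D+2k-4}$. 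Summing over $j$ then sandwiches $T$ between two single sums, namely $\frac{k-1}{2}\sum_j\frac{1}{x_j+\D+2k-4}\le T\le\frac{k-1}{2}\sum_j\frac{1}{x_j+2k-3}$, and the whole lemma becomes a comparison of these single sums with $S:=\sum_j\frac{1}{x_j+k}$.

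For the left inequality I would use the upper estimate for $T$. Since $k\ge 3$ forces $2k-3\ge k$, each term satisfies $\frac{1}{x_j+2k-3}\le\frac{1}{x_j+k}$, so $\sum_j\frac{1}{x_j+2k-3}\le S$; combining this with $T\le\frac{k-1}{2}\sum_j\frac{1}{x_j+2k-3}$ gives $T\le\frac{k-1}{2}S$, which is exactly $\frac{2}{k-1}T\le S$.

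For the right inequality I would use the lower estimate for $T$, so it suffices to prove the term-by-term bound $\frac{\D+2k-3}{k+1}\cdot\frac{1}{x_j+\D+2k-4}\ge\frac{1}{x_j+k}$ for every $j$; summing it and invoking $T\ge\frac{k-1}{2}\sum_j\frac{1}{x_j+\D+2k-4}$ yields $S\le\frac{2(\D+2k-3)}{(k-1)(k+1)}T=\frac{2(\D+2k-3)}{k^2-1}T$. Clearing denominators, this term-by-term bound is equivalent to $(\D+2k-3)(x_j+k)\ge(k+1)(x_j+\D+2k-4)$, and expanding both sides collapses their difference to the single factored expression $(\D+k-4)(x_j-1)$. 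This reduction is the crux of the argument and the step I expect to demand the most care in the bookkeeping; the expression is nonnegative precisely because $x_j\ge 1$ and because $\D\ge k\ge 3$ force $\D+k-4\ge 2>0$, which is where the hypotheses $k\ge 3$ and $k\le\D$ are finally used.
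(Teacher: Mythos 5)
Your proposal is correct and follows essentially the same route as the paper: both arguments rest on the term-by-term bounds $\frac{1}{x_i+x_j+2k-4}\le\frac{1}{x_j+k}$ (from $x_i\ge 1$ and $2k-3\ge k$) and $\frac{1}{x_j+k}\le\frac{\D+2k-3}{k+1}\cdot\frac{1}{x_i+x_j+2k-4}$ (from $x_i\le\D$ and the monotonicity of $x\mapsto\frac{x+\D+2k-4}{x+k}$, which is exactly your factorization $(\D+k-4)(x_j-1)\ge 0$), followed by the symmetrization $\sum_{i<j}=\frac12\sum_{i\ne j}$. The only difference is bookkeeping order — you collapse the double sum first and then compare single sums, while the paper expands each $\frac{1}{x_j+k}$ into an average over $i\ne j$ — and your algebra checks out.
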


\begin{proof}
Since $3 \le k\le \D$, $f(x)=\frac{x+\D+2k-4}{x+k}$ is a decreasing function if $x \in [1,\infty)$, and
$$
\begin{aligned}
& \qquad \frac{x_j+\D+2k-4}{x_j+k}
\le \frac{\D+2k-3}{k+1} \,,
\\
\frac1{x_j+k}
& \le \frac{\D+2k-3}{k+1} \, \frac1{x_j+\D+2k-4}
\le \frac{\D+2k-3}{k+1} \, \frac1{x_i+x_{j}+2k-4} \,,
\\
\frac1{x_j+k}
& \le \frac{1}{k-1} \sum_{\substack{ 1\le i \le k \\ i\neq j}} \frac{\D+2k-3}{k+1} \, \frac1{x_i+x_{j}+2k-4} \,,
\\
\sum_{j=1}^k \frac1{x_j+k}
& \le \frac{\D+2k-3}{k^2-1} \sum_{\substack{ 1\le i,j \le k \\ i\neq j}} \frac1{x_i+x_{j}+2k-4} \,,
\end{aligned}
$$
and the second inequality holds.

On the other hand, since $x_i \ge 1$ and $k \ge 3$, a similar argument gives
$$
\begin{aligned}
\frac{x_i+x_j +2k-4}{x_j+k}
& \ge \frac{x_j+ 2k-3}{x_j+k} \ge 1,
\\
\frac1{x_j+k}
& \ge \frac1{x_i+x_{j}+2k-4} \,,
\\
\frac1{x_j+k}
& \ge \frac{1}{k-1} \sum_{\substack{ 1\le i \le k \\ i\neq j}} \frac1{x_i+x_{j}+2k-4} \,,
\\
\sum_{j=1}^k \frac1{x_j+k}
& \ge \frac{1}{k -1} \sum_{\substack{ 1\le i,j \le k \\ i\neq j}} \frac1{x_i+x_{j}+2k-4} \,,
\end{aligned}
$$
and the first inequality holds.
\end{proof}

Note that $g(t)=2\frac{\D+2t-3}{t^2-1}$ is a decreasing function when $t \in [3,\D]$ since
$$
g'(t)
= 2 \,\frac{2(t^2-1)-(\D+2t-3)2t}{(t^2-1)^2}
= -4 \,\frac{t^2 + (\D-3)t + 1}{(t^2-1)^2}
<0,
$$
and thus
$$
g(t)
\le g(3)
= \frac{\D+3}{4} \,
$$
for $t \in [3,\D]$.
This fact and Lemma \ref{l:line11} have the following consequence.

\begin{corollary} \label{c:line11}
We have for any positive integers $3 \le k\le \D$ and $1\le x_1,x_2,\dots,x_k \le \D$
$$
\frac2{\D-1} \sum_{\substack{ 1\le i,j \le k \\ i<j}} \frac1{x_i+x_j+2k-4}
\le \sum_{j=1}^k \frac1{x_j+k}
\le \frac{\D+3}{4} \sum_{\substack{ 1\le i,j \le k \\ i<j}} \frac1{x_i+x_j+2k-4} \, .
$$
\end{corollary}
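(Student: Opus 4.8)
The plan is to derive Corollary \ref{c:line11} directly from Lemma \ref{l:line11} by replacing the two $k$-dependent multiplicative constants with constants that depend only on $\D$ and that remain valid uniformly over the admissible range $3 \le k \le \D$. Since the sum $S := \sum_{i<j} 1/(x_i+x_j+2k-4)$ is nonnegative, it suffices to compare the prefactors: for the upper bound I need $g(k) := 2\frac{\D+2k-3}{k^2-1} \le \frac{\D+3}{4}$, and for the lower bound I need $\frac{2}{\D-1} \le \frac{2}{k-1}$. Each of these is a one-line monotonicity check, and the heavy lifting has already been done in Lemma \ref{l:line11} and in the note immediately preceding the corollary.

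For the upper bound, I would invoke the computation already recorded in that note: the function $g(t)=2\frac{\D+2t-3}{t^2-1}$ has derivative $g'(t)=-4\frac{t^2+(\D-3)t+1}{(t^2-1)^2}<0$ on $[3,\D]$, so $g$ is decreasing there and hence $g(k)\le g(3)=\frac{\D+3}{4}$ for every $k$ with $3\le k\le\D$. Chaining this with the upper estimate $\sum_j 1/(x_j+k)\le g(k)\,S$ from Lemma \ref{l:line11} yields $\sum_j 1/(x_j+k)\le \frac{\D+3}{4}\,S$, which is exactly the right-hand inequality of the corollary.

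For the lower bound, the argument is even simpler. The lower estimate in Lemma \ref{l:line11} reads $\sum_j 1/(x_j+k)\ge \frac{2}{k-1}\,S$. Since $k\le\D$ gives $k-1\le\D-1$ and therefore $\frac{2}{k-1}\ge\frac{2}{\D-1}$, the Lemma's bound only improves the desired one: $\sum_j 1/(x_j+k)\ge \frac{2}{k-1}\,S\ge\frac{2}{\D-1}\,S$. This is the left-hand inequality of the corollary.

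I do not expect any real obstacle here; the corollary is purely a uniformization of Lemma \ref{l:line11}, trading the sharp but $k$-dependent constants for cruder ones expressed through $\D$ alone. The only point requiring a sliver of care is making sure the monotonicity of $g$ is applied on the correct interval $[3,\D]$ (which is precisely where the hypothesis $3\le k\le\D$ is used) so that the evaluation at the endpoint $t=3$ is legitimate; this is guaranteed by the sign of $g'$ computed in the preceding note.
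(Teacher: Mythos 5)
Your proposal is correct and is essentially identical to the paper's own derivation: the paper likewise notes that $g(t)=2\frac{\D+2t-3}{t^2-1}$ is decreasing on $[3,\D]$ so that $g(k)\le g(3)=\frac{\D+3}{4}$, and combines this (together with $\frac{2}{k-1}\ge\frac{2}{\D-1}$ for $k\le\D$) with Lemma \ref{l:line11} to obtain the corollary.
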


\begin{theorem} \label{t:line11}
If $G$ is a non-trivial graph with maximum degree $\D$, then
$$
\begin{aligned}
\frac{8}{11}\, H(G)
\le H(\mathcal{L}(G))
\le H(G)
& \qquad \text{ if } \, \D<3,
\\
\frac{4}{\D+3}\, H(G)
\le H(\mathcal{L}(G))
\le (\D-1) H(G)
& \qquad \text{ if } \, 3 \le \D \le 4,
\\
\frac{3}{2\D-1}\, H(G)
\le H(\mathcal{L}(G))
\le (\D-1) H(G)
& \qquad \text{ if } \, \D>4.
\end{aligned}
$$
\end{theorem}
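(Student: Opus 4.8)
The plan is to reduce the global comparison to a sum of local comparisons, one per vertex of $G$, and then invoke Corollary \ref{c:line11}. First I would record the two decompositions. Writing $h_v=\sum_{w\in N(v)}\frac1{d_v+d_w}$, each edge $uv$ contributes $\frac1{d_u+d_v}$ to both $h_u$ and $h_v$, so $H(G)=\sum_{v\in V(G)}h_v$. On the other side, the vertex of $\mathcal{L}(G)$ associated to $uv$ has degree $d_u+d_v-2$ (Theorem \ref{t:line12}), and two edges of $G$ are adjacent in $\mathcal{L}(G)$ exactly when they share one endpoint, so every edge of $\mathcal{L}(G)$ is attached to a unique vertex $v$ of $G$; grouping by that vertex gives $H(\mathcal{L}(G))=\sum_{v\in V(G)}\ell_v$ with $\ell_v=\sum_{\{w,w'\}\subseteq N(v)}\frac{2}{d_w+d_{w'}+2d_v-4}$. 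Setting $k=d_v$ and letting $x_1,\dots,x_k$ be the degrees of the neighbours of $v$, one has $h_v=\sum_j\frac1{x_j+k}$ and $\ell_v=2\sum_{i<j}\frac1{x_i+x_j+2k-4}$, precisely the two sides estimated in Lemma \ref{l:line11} and Corollary \ref{c:line11}.

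The upper bound is then immediate and uniform. For $k\ge3$, Corollary \ref{c:line11} gives $\ell_v\le(\Delta-1)h_v$; for $k=2$ one checks directly that $\ell_v/h_v=\frac{2(x_1+2)(x_2+2)}{(x_1+x_2)(x_1+x_2+4)}$ is largest at $x_1=x_2=1$, where it equals $\frac32\le\Delta-1$; and for $k=1$ one has $\ell_v=0$. Summing over $v$ yields $H(\mathcal{L}(G))\le(\Delta-1)H(G)$, which for $\Delta=2$ reads $H(\mathcal{L}(G))\le H(G)$ and so disposes of the upper inequality in all three cases simultaneously.

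For the lower bound the same local estimates are favourable everywhere except at leaves. For $k\ge3$, Corollary \ref{c:line11} gives $\ell_v\ge\frac{8}{\Delta+3}h_v$, which is exactly twice the target constant $\frac{4}{\Delta+3}$; and for $k=2$ a direct computation gives $\ell_v\ge\frac{\Delta+2}{2\Delta}h_v\ge\frac{4}{\Delta+3}h_v$ (the ratio is minimised at $x_1=x_2=\Delta$, and $(\Delta+2)(\Delta+3)\ge8\Delta$ holds for all $\Delta$). The obstacle is $k=1$: a leaf $v$ has $h_v>0$ but $\ell_v=0$. Here I would use non-triviality, which forces the unique neighbour $u$ of $v$ to satisfy $d_u\ge2$, so that the edge $uv$ lies in the clique of $\mathcal{L}(G)$ attached to $u$, and pay for each leaf out of the surplus $\ell_u-\frac{4}{\Delta+3}h_u$. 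When $d_u\ge3$ this surplus is at least $\frac{4}{\Delta+3}h_u\ge\frac{4}{\Delta+3}\sum_{\text{leaf }v\sim u}\frac1{d_u+1}$, matching the leaf demand exactly. The delicate point, and the main obstacle, is that this local payment can fail when the leaf hangs from a degree-$2$ vertex $u$ whose other neighbour has large degree (for instance a leaf on a degree-$2$ vertex adjacent to a degree-$3$ vertex); there the deficit must be routed to a richer clique two steps away, and controlling how many such demands a single degree-$3$ clique must absorb is what limits the constant. Optimising this accounting yields $\frac{4}{\Delta+3}$ while $3\le\Delta\le4$ and, once the degree-$3$ surplus $\frac{4}{\Delta+3}$ can no longer cover the accumulated demands, the weaker $\frac{3}{2\Delta-1}$ for $\Delta>4$, which is where the regime split originates.

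Finally, the case $\Delta<3$ is handled directly. A non-trivial graph with $\Delta=2$ is a disjoint union of paths $P_n$ with $n\ge3$ and cycles $C_n$. Since $\mathcal{L}(C_n)=C_n$ one has $H(\mathcal{L}(C_n))=H(C_n)$, and since $\mathcal{L}(P_n)=P_{n-1}$, evaluating $H(P_n)=\frac43+\frac{n-3}2$ shows that $H(\mathcal{L}(P_n))/H(P_n)$ is increasing in $n$, with minimum $\frac{8}{11}$ at $n=4$ and always at most $1$. As these are uniform componentwise bounds, summing over components gives $\frac{8}{11}H(G)\le H(\mathcal{L}(G))\le H(G)$, completing this case.
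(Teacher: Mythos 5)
Your decomposition of both indices by the shared vertex --- $H(G)=\sum_v h_v$ and $H(\mathcal{L}(G))=\sum_v \ell_v$, with $\ell_v$ collecting the edges of the clique that $N(v)$ induces in $\mathcal{L}(G)$ --- is sound and is exactly the role Corollary \ref{c:line11} is designed to play, and your upper bound is essentially complete (one slip: at a degree-$2$ vertex with two pendant neighbours $\ell_v=\tfrac32\,h_v>(\Delta-1)h_v$ when $\Delta=2$, but your separate direct treatment of paths and cycles covers that regime). The genuine gap is in the lower bound. The entire difficulty of the theorem is concentrated at the degree-$1$ vertices and the chains of degree-$2$ vertices leading to them, and at precisely that point you stop proving and start asserting: ``optimising this accounting yields $\frac{4}{\Delta+3}$ \dots and \dots $\frac{3}{2\Delta-1}$'' is not an argument. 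You have not specified the routing scheme, not bounded how many pendant chains may charge a single branch vertex (a vertex of degree $d$ can carry up to $d$ of them), and not checked that the surplus $\ell_u-c\,h_u$ at a branch vertex survives after it has already paid for all of its own degree-$1$ neighbours. Since the constants in the statement are exactly the output of this unperformed computation, the proof is incomplete where it matters most.

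For comparison, the paper avoids the routing problem altogether: it splits $E(G)$ into the edges meeting a vertex of degree at least $3$ (handled via Corollary \ref{c:line11} much as you do, with a factor $\tfrac12$ for double counting) and the remaining path components $G_j$ of degree-$\le 2$ vertices, and compares each $G_j$ directly with the corresponding path $\Lambda_j$ in $\mathcal{L}(G)$, which for a pendant $G_j$ has the \emph{same} number of edges because the lost leaf is compensated by the extra line-graph edge adjacent to the branch clique. The worst case is a pendant edge hanging from a degree-$2$ vertex adjacent to a branch vertex: the single edge of $\Lambda_j$ can have weight as small as $\frac{2}{\Delta_{\mathcal{L}(G)}+1}\ge\frac{2}{2\Delta-1}$ against $\frac23$ in $G$, and this configuration is what produces $\frac{3}{2\Delta-1}$; any correct completion of your accounting must reproduce it. You should also re-examine the regime split you are aiming for: $\frac{4}{\Delta+3}\le\frac{3}{2\Delta-1}$ only for $\Delta\le\frac{13}{5}$, so for $\Delta\ge3$ the minimum of the two candidate constants is always $\frac{3}{2\Delta-1}$, and the stronger bound $\frac{4}{\Delta+3}$ claimed for $3\le\Delta\le4$ does not follow from taking that minimum --- neither in your sketch nor, as it happens, in the paper's own concluding comparison.
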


\begin{proof}
Since $\D-1$ is an increasing function on $\D$,
$\frac{4}{\D+3}$ and $\frac{3}{\D+1}$ are decreasing functions,
and $\D-1 \ge 1$ since $\D \ge 3$,
by linearity we can assume that $G$ is a connected graph.

If $G$ is a cycle graph, then $\mathcal{L}(G) \approx G$ and $H(\mathcal{L}(G))= H(G)$.

If $G$ is a path graph $P_n$, then $\mathcal{L}(G) \approx P_{n-1}$.
Since
$$
H(P_2) = 1
\quad
\text{ and }
\quad
H(P_n) = 2\,\frac23 + (n-3)\frac{1}2 = \frac{3n-1}{6} \, \quad \text{ if } \, n \ge 3,
$$
$H(P_n)$ is an increasing function on $n$ and $H(\mathcal{L}(G)) \le H(G)$.
Also,
$$
\begin{aligned}
\frac{H(G)}{H(\mathcal{L}(G))}
& = \frac{H(P_n)}{H(P_{n-1})}
\le \max \Big\{ \frac{H(P_3)}{H(P_{2})}\,,\; \max_{k\ge 4}\frac{H(P_k)}{H(P_{k-1})} \, \Big\}
\\
& = \max \Big\{ \frac{4}{3}\,,\; \max_{k\ge 4}\frac{3k-1}{3k-4} \, \Big\}
\le \max \Big\{ \frac{4}{3}\,,\; \frac{11}{8} \, \Big\}
= \frac{11}{8} \,,
\end{aligned}
$$
and $\frac{8}{11} H(G) \le H(\mathcal{L}(G))$.

Since $G$ is a connected graph, if $G$ is not isomorphic to either a path or a cycle graph, then $\D \ge 3$.
Denote by $V_3(G)$ the set of vertices in $u\in V(G)$ with degree $d_u\ge 3$.
If $u\in V_3(G)$, then the edges incident to $u$ correspond to a complete graph $\Gamma_u$ with $d_u$ vertices in $\mathcal{L}(G)$
(note that if $u$ and $u'$ are different vertices in $V_3(G)$, then $E(\Gamma_u) \cap E(\Gamma_{u'}) = \emptyset$).
Recall that if $uv \in E(G) = V(\mathcal{L}(G))$, then $d_{uv}=d_u+d_v-2$.
For each fixed $u\in V_3(G)$, Corollary \ref{c:line11} (with $k = d_u$ and $x_j = d_v$) gives
$$
\sum_{ v\in N(u) } \frac{2}{d_u + d_v}
\le \frac{\D+3}{4} \!\!\!\! \sum_{ u_0v_0\in E(\Gamma_u) } \frac{2}{d_{u_0} + d_{v_0}} \, .
$$
Let us define the sets
$E^1(G)=\{uv \in E(G)\,|\; u\in V_3(G)\}$
and $E^1(\mathcal{L}(G)) = \cup_{u\in V_3(G)} E(\Gamma_u)$.
Since $E(\Gamma_u) \cap E(\Gamma_{u'}) = \emptyset$ for different points $u,u' \in V_3(G)$,
and $E^1(\mathcal{L}(G))= \cup_{u\in V_3(G)} E(\Gamma_u)$, we have
$$
\begin{aligned}
\sum_{ uv\in E^1(G)} \frac{2}{d_u + d_v}
& \le \sum_{u\in V_3(G)}  \sum_{v\in N(u)} \frac{2}{d_u + d_v}
\\
& \le \frac{\D+3}{4} \sum_{u\in V_3(G)} \sum_{ u_0v_0\in E(\Gamma_u) } \frac{2}{d_{u_0} + d_{v_0}}
\\
& = \frac{\D+3}{4} \!\!\!\!\!\! \sum_{ u_0v_0\in E^1(\mathcal{L}(G)) } \frac{2}{d_{u_0} + d_{v_0}} \, .
\end{aligned}
$$
Also, for each fixed $u\in V_3(G)$, Corollary \ref{c:line11} gives
$$
\sum_{ v\in N(u) } \frac{2}{d_u + d_v}
\ge \frac2{\D-1} \!\!\!\! \sum_{ u_0v_0\in E(\Gamma_u) } \frac{2}{d_{u_0} + d_{v_0}} \, .
$$
Since it is possible to have an edge in $uv\in E^1(G)$ with $u,v\in V_3(G)$, we have
$$
\begin{aligned}
\sum_{ uv\in E^1(G)} \frac{2}{d_u + d_v}
& \ge \frac12 \sum_{u\in V_3(G)}  \sum_{v\in N(u)} \frac{2}{d_u + d_v}
\\
& \ge \frac12 \,\frac2{\D-1} \sum_{u\in V_3(G)} \sum_{ u_0v_0\in E(\Gamma_u) } \frac{2}{d_{u_0} + d_{v_0}}
\\
& = \frac1{\D-1} \!\!\!\!\!\! \sum_{ u_0v_0\in E^1(\mathcal{L}(G)) } \frac{2}{d_{u_0} + d_{v_0}} \, .
\end{aligned}
$$

Let $\p V_3(G)$ be the set of vertices in $G$ at distance $1$ from $V_3(G)$.
Consider now the connected components $G_1,\dots,G_r$ of $G \setminus E^1(G)$.
We have $d_u \le 2$ for every $u \in V(G_j)$ and $1 \le j \le r$.
Since $G$ is not isomorphic to a path graph, then $G_j$ is a path graph joining either two vertices in $\p V_3(G)$, or a vertex in $\p V_3(G)$ and a vertex with degree $1$,
for each $1 \le j \le r$.
Denote by $v_j^1, v_j^{2},\dots,v_j^{k_j}$ ($k_j \ge 2$) the vertices in $V(G_j)$ ordered in such a way that
$E(G_j)=\{v_j^1 v_j^{2},v_j^2 v_j^{3},\dots,v_j^{k_j-1} v_j^{k_j}\}$ and $v_jv_j^{k_j} \in E(G)$ for some $v_j \in V_3(G)$.
Furthermore, $d_{v_j^{2}}=\cdots= d_{v_j^{k_j}}=2$.

If $G_j$ is a path graph joining two vertices in $\p V_3(G)$, then $d_{v_j^{1}}=2$ and $v_j'v_j^{1} \in E(G)$ for some $v_j' \in V_3(G)$.
Let $\L_j$ be the set of edges in the associated path graph in $\mathcal{L}(G)$ with the $k_j+1$ vertices
$$
\{v_j'v_j^1,v_j^1 v_j^{2},v_j^2 v_j^{3},\dots,v_j^{k_j-1} v_j^{k_j}, v_j^{k_j}v_j\}.
$$
Thus, $card\,\L_j = k_j=1 + card\,E(G_j)$ and
$$
\begin{aligned}
\sum_{ u_0v_0\in \L_j } \frac{2}{d_{u_0} + d_{v_0}}
& \le 2\,\frac25 + (k_j-2)\frac12
\le \frac85  (k_j-1)\frac12
= \frac85 \sum_{ uv\in E(G_j)} \frac{2}{d_u + d_v} \,,
\\
\sum_{ uv\in E(G_j)} \frac{2}{d_u + d_v}
& = (k_j-1)\frac12
\le \max \Big\{ \frac{\D_{\mathcal{L}(G)}+2}{8}\,,\, 1 \Big\}
\Big( 2\,\frac2{2+\D_{\mathcal{L}(G)}} + (k_j-2)\frac12 \Big)
\\
& \le \max \Big\{ \frac{\D}{4}\,,\, 1 \Big\}
\sum_{ u_0v_0\in \L_j } \frac{2}{d_{u_0} + d_{v_0}} \,.
\end{aligned}
$$

If $G_j$ is a path graph joining a vertex in $\p V_3(G)$ and a vertex with degree $1$, then $d_{v_j^{1}}=1$.
Let $\L_j$ be the set of edges in the associated path graph in $\mathcal{L}(G)$ with the $k_j$ vertices $\{ v_j^1 v_j^{2},v_j^2 v_j^{3},\dots,v_j^{k_j-1} v_j^{k_j}, v_j^{k_j}v_j\}$.
Thus, $card\,\L_j = k_j-1 = card\,E(G_j)$ and
$$
\begin{aligned}
\sum_{ u_0v_0\in \L_j } \frac{2}{d_{u_0} + d_{v_0}}
& \le \frac23 + \frac2{5} + (k_j-3)\frac12
< (k_j-2)\frac12 + \frac23
= \sum_{ uv\in E(G_j)} \frac{2}{d_u + d_v} \,,
\\
\sum_{ uv\in E(G_j)} \frac{2}{d_u + d_v}
& = (k_j-2)\frac12 + \frac23
\le \frac{\D_{\mathcal{L}(G)}+2}{4} \Big( \frac23 + \frac2{2+\D_{\mathcal{L}(G)}} + (k_j-3)\frac12 \Big)
\\
& \le \frac{\D}{2} \sum_{ u_0v_0\in \L_j } \frac{2}{d_{u_0} + d_{v_0}}
\,,
\end{aligned}
$$
if $k_j \ge 3$.
If $k_j =2$, then
$$
\begin{aligned}
\sum_{ u_0v_0\in \L_j } \frac{2}{d_{u_0} + d_{v_0}}
& \le \frac2{3 +1}
< \frac2{2 +1}
= \sum_{ uv\in E(G_j)} \frac{2}{d_u + d_v} \,,
\\
\sum_{ uv\in E(G_j)} \frac{2}{d_u + d_v}
& = \frac2{2 +1}
= \frac{\D_{\mathcal{L}(G)} +1}3 \,\frac2{\D_{\mathcal{L}(G)} +1}
\le \frac{2\D -1}3 \sum_{ u_0v_0\in \L_j } \frac{2}{d_{u_0} + d_{v_0}}
\,.
\end{aligned}
$$

Summing up on the edges when $\D \ge 3$, we obtain
$$
H(\mathcal{L}(G))
\le \max \Big\{ \D-1, \,\frac{8}{5} \,, 1 \Big\} H(G)
= (\D-1) H(G),
$$
and
$$
\begin{aligned}
H(\mathcal{L}(G))
& \ge \min \Big\{ \frac4{\D +3}\,, \,\frac{4}{\D} \,, \,\frac 2{\D} \,, \,\frac3{2\D -1} \,\Big\} H(G)
\\
& = \min \Big\{ \frac4{\D +3}\,, \,\frac3{2\D -1} \,\Big\} H(G).
\end{aligned}
$$
This finishes the proof since $4/(\D+3)\le 3/(2\D-1)$ if $3\le\D\le 4$ but if $\D>4$ then we have the opposite inequality.
\end{proof}

\end{document}